\titleformat{\section}{\Large\sc}{\thesection.}{1em}{} %police sections
\newtheoremstyle{plain}{\topsep}{\topsep}{\itshape}{}{\bf}{ $-$}{ }
{\thmname{#1}\thmnumber{ #2}\thmnote{ \normalfont(#3)}}
\newcommand{\theoremname}{Theorem} \newcommand{\defname}{Definition}
\newcommand{\lemname}{Lemma} \newcommand{\corname}{Corollary}
 \newcommand{\exname}{Example}  \newcommand{\rmqname}{Remark}
\newtheorem{theoreme}{\theoremname}[section]
\newtheorem*{theoreme*}{\theoremname}
\newtheorem{proposition}[theoreme]{Proposition}
\newtheorem{lemme}[theoreme]{\lemname}
\theoremstyle{definition}
\newtheorem{definition}[theoreme]{\defname}
\newtheorem{exemple}[theoreme]{\exname}
\newtheorem{rmq}[theoreme]{\rmqname}
\newcommand{\NN}{\mathbb N}
\newcommand{\RR}{\mathbb R}
\newcommand{\CC}{\mathbb C}
\newcommand{\cA}{\mathcal{A}} \newcommand{\cB}{\mathcal{B}}
\newcommand{\cC}{\mathcal{C}} \newcommand{\cD}{\mathcal {D}}
 \newcommand{\cF}{\mathcal{F}}
\newcommand{\cG}{\mathcal{G}} 
\newcommand{\cI}{\mathcal{I}} 
\newcommand{\cM}{\mathcal{M}} 
\newcommand{\cO}{\mathcal{O}} 
 \newcommand{\cR}{\mathcal{R}}
\newcommand{\cY}{\mathcal{Y}} 
\DeclareMathOperator{\ob}{ob}
\DeclareMathOperator{\tr}{tr}
\DeclareMathOperator*{\cotens}{\Box}
\DeclareMathOperator{\im}{Im}
\DeclareMathOperator{\vect}{span}
\newcommand{\ie}{\textit{i.e. }}
\newcommand{\tq}{\;\, ; \;}
\newcommand{\appl}[5][]{\begin{array}[t]{crcl}
{#1} &\!\! {#2} & \!\!\rightarrow\!\! & {#3} \!\!\\ 
&\!\! {#4} & \!\!\mapsto\!\! & {#5} \!\!
\end{array}}
\newcommand{\cnk}[2]{ \begin{scriptsize}\left( \begin{array}{c}
 \!\!\!\! {#1} \!\!\!\! \\ 
 \!\!\!\! {#2} \!\!\!\!
\end{array}  \right)\end{scriptsize}}
\title{Classification of bicovariant differential calculi over free orthogonal
Hopf algebras}
\date{}
\author{Manon Thibault de Chanvalon}
\begin{document}
\maketitle
\begin{center}
\textit{{Laboratoire de~Mathématiques (UMR 6620)}, Université Blaise Pascal, \\ Complexe universitaire des Cézeaux, 63171 Aubière Cedex, France.}
\end{center}
\begin{center}
\href{mailto:manon.thibault@math.univ-bpclermont.fr}{\tt manon.thibault@math.univ-bpclermont.fr}
\vspace{1em}
\end{center}

\begin{abstract}
We show that if two Hopf algebras are monoidally equivalent, then their categories of bicovariant differential
calculi are equivalent. We then classify,
for $q \in \CC^*$ not a root of unity, the finite dimensional bicovariant differential calculi over the Hopf algebra
$ \cO_q(SL_2)$.
Using a monoidal equivalence between free orthogonal Hopf algebras and $ \cO_q(SL_2)$
for a given $q$, this leads us to the classification of finite dimensional bicovariant differential calculi over  free orthogonal Hopf algebras.
\end{abstract}

\section*{Introduction}

The notion of \emph{differential calculus} over a Hopf algebra has been introduced by Woronowicz in~\cite{wor_cd}, with the purpose of giving a natural adaptation of
 differential geometry over groups, in the context of quantum groups.
An important question in this topic, is the classification of bicovariant differential calculi
over a given Hopf algebra, see for example~\cite{BS_class}, \cite{maj_class} or \cite{hs_cd_SLq}.

The aim of the present paper is to classify the finite dimensional
(first order) bicovariant differential calculi
over an important class of Hopf algebras, namely the \emph{free orthogonal Hopf algebras},
also called \emph{Hopf algebras associated
to non-degenerate bilinear forms}~\cite{DV_BE}.
Given an invertible matrix $E \in GL_n(\CC)$ with $n \geqslant 2$,
the free orthogonal Hopf algebra $\cB(E)$ associated with $E$ 
is  the universal Hopf algebra generated 
by a family of elements $(a_{ij})_{1\leqslant i,j \leqslant  n}$
submitted to the relations:
\[E^{-1} a^t E a =I_n  =
	a E^{-1}a^t E ,
\]
where $ a $ is the matrix $ (a_{ij})_{1\leqslant i,j \leqslant  n}$. Its coproduct, counit and antipode are defined by:
\[\Delta (a_{ij})  = \sum\limits_{k=1}^n a_{ik} \otimes a_{kj}, \qquad
\varepsilon (a)  = I_n, \qquad
S(a)  = E^{-1}a^tE.\]
The Hopf algebra $\cB(E)$ can also be obtained as an appropriate quotient of the FRT bialgebra associated
to Yang-Baxter operators constructed by Gurevich~\cite{gure}.

If $E \overline{E} = \lambda I_n$, with $\lambda \in \RR^*$,
 there exists an involution $*$ on $\cB(E)$
defined by $a_{ij}^* = \left(E^{-1} a^t E\right)_{ji}$, endowing
$\cB(E)$ with a Hopf $*$-algebra structure. This Hopf $*$-algebra corresponds
to a free orthogonal compact quantum group as defined in~\cite{wang_vandaele} or \cite{qortho}, 
and is generally denoted by $\cA_o((E^t)^{-1})$.
This justifies the term ``free orthogonal Hopf algebra'' for $\cB(E)$.

The starting point of our classification is a result of~\cite{bic_BE},
which states that if $q \in \CC^*$ satisfies
$q^2 + \tr(E^{-1}E^t)q + 1 = 0$, then the Hopf algebras
$\cB(E)$ and $\cO_q(SL_2)$ are monoidally equivalent, \ie their categories of
comodules are monoidally equivalent. The proof of~\cite{bic_BE},
is based on a deep result of Schauenburg~\cite{Sch_big},
and gives an explicit description of the correspondence between
$\cB(E)$-comodules and $\cO_q(SL_2)$-comodules.
We use here similar arguments to show that if two Hopf algebras
are monoidally equivalent, then their categories of bicovariant differential calculi are equivalent
(Theorem~\ref{th_eq}). This theorem generalizes a result of~\cite{MO}, where the two monoidally equivalent Hopf
algebras are assumed to be related by a cocycle twist. 
Applying Theorem~\ref{th_eq} to the Hopf algebras
$\cB(E)$ and $\cO_q(SL_2)$, the study of bicovariant differential calculi over the Hopf algebra  $\cB(E)$ is simplified, and
 therefore reduces to the study of bicovariant differential calculi
over $\cO_q(SL_2)$.

This classification 
 has been made over $\cO_q(SL_2)$ in~\cite{hs_cd_SLq}
for transcendental values of $q$
(which is not the case here since $q$ has to satisfy $q^2 + \tr(E^{-1}E^t)q + 1 = 0$).
Our classification uses a different approach than in~\cite{hs_cd_SLq}, and is based on the classification of the finite dimensional $ \cO_q(SL_2)$-Yetter-Drinfeld modules made in~\cite{TAK}. 

The paper is organized as follows.
We gather in the first section some known results about bicovariant differential calculi
over Hopf algebras, and their formulation in terms of Yetter-Drinfeld modules.
Furthermore, we show that if the category of Yetter-Drinfeld modules
over a Hopf algebra $H$ is semisimple, then the bicovariant differential calculi
over $H$ are inner.
In Section 2, using the language of cogroupoids~\cite{HG_co}, 
we prove that two  monoidally equivalent Hopf algebras
have equivalent categories of bicovariant differential calculi.
We finally classify in Section 3 the finite dimensional bicovariant differential calculi
over the Hopf algebra $\cO_q(SL_2)$ for $q \in \CC^*$ not a root of unity, using
the fact that by~\cite{TAK} the category of finite dimensional
Yetter-Drinfeld modules over $\cO_q(SL_2)$ is semisimple.
 This allows to classify the finite dimensional bicovariant differential calculi
over $\cB(E)$, provided that the solutions of the equation
$q^2 + \tr(E^{-1}E^t)q + 1 = 0$ are not roots of unity.

\subsection*{Notations and Conventions}
Let $H$ be a Hopf algebra.
Its comultiplication, antipode and counit will respectively be denoted by
$\Delta$, $S$ and $\varepsilon$.
A coaction of a left (respectively right) $H$-comodule will generally be denoted by $\lambda$
(respectively $\rho$). 

We will use Sweedler's notations:
$\Delta(x) = \sum x_{(1)} \otimes x_{(2)}$ for $x \in H$, 
and
$\rho(v) = \sum v_{(0)} \otimes v_{(1)}$ for $v$ in a right comodule $V$.

\section{Bicovariant differential calculi}

We start this section by recalling the definition of a bicovariant differential calculus,
and of the equivalent notion, expressed in terms of Yetter-Drinfeld modules (called \emph{reduced differential calculus}
in this paper).
We then prove some basic lemmas which will be useful in the sequel.
The main result of this section states that if the category of (finite dimensional) Yetter-Drinfeld modules
over a Hopf algebra $H$ is semisimple, then the  (finite dimensional)  bicovariant differential calculi
over $H$ are inner.

We refer to~\cite{KS} for background material on Hopf algebras and comodules.

\begin{definition}
Let $H$ be a Hopf algebra. A \emph{Hopf bimodule} $M$ over $H$
is an $H$-bimodule together with a
left comodule structure $\lambda: M \rightarrow H \otimes M$ and a right comodule structure
$\rho: M \rightarrow M \otimes H$ such that:
\begin{itemize}
\item $\forall x,y \in H, \forall v \in M$, $\lambda(x.v.y) = \Delta(x).\lambda(v).\Delta(y)$,
\item $\forall x,y \in H, \forall v \in M$, $\rho(x.v.y) = \Delta(x).\rho(v).\Delta(y)$,
\item $(id_H \otimes \rho) \circ \lambda = (\lambda \otimes id_H) \circ \rho$.
\end{itemize}
The category of Hopf bimodules over $H$, whose morphisms
are the maps which are right and left linear and colinear over $H$, is denoted
by $^H_H \cM^H_H$.
\end{definition}

\begin{definition}
Let $H$ be a Hopf algebra. A (right) \emph{Yetter-Drinfeld module} over $H$ is a right $H$-module and a right $H$-comodule $V$ such that:
\[\forall x \in H, \forall v \in V, \quad \sum (v.x)_{(0)} \otimes (v.x)_{(1)} = \sum v_{(0)}.x_{(2)} \otimes S(x_{(1)})v_{(1)}x_{(3)} .\]
The category of Yetter-Drinfeld modules over $H$, whose morphisms
are the maps which are both linear and colinear over $H$, is denoted
by $\cY\cD(H)$. The category  of finite dimensional Yetter-Drinfeld modules
over $H$ is denoted by $\cY\cD_f(H)$.
\end{definition}

\begin{exemple}\label{ex_eps}
Let $H$ be a Hopf algebra. We denote by $\CC_\varepsilon$ the Yetter-Drinfeld module
whose base-space is $\CC$, with right coaction $\lambda \mapsto \lambda \otimes 1$
and right module structure defined by
$\lambda \triangleleft x = \lambda \varepsilon(x)$ for $\lambda \in \CC_\varepsilon$ and
$x \in H$.
\end{exemple}

We recall from~\cite{YDmod} the correspondence between Yetter-Drinfeld modules and Hopf bimodules.

\begin{theoreme}[{\cite[Theorem 5.7]{YDmod}}]\label{th_YD}
Let $H $ be a Hopf algebra. The categories $^H _H \cM ^H _H$ and  $\cY \cD(H)$
are equivalent.
\end{theoreme}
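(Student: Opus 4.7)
The plan is to construct two mutually quasi-inverse functors explicitly, the key device being to take left coinvariants.

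For $F:{}^H_H\cM^H_H\to\cY\cD(H)$, I would send a Hopf bimodule $M$ to its space of left coinvariants $F(M):=\{v\in M\tq\lambda(v)=1\otimes v\}$. The compatibility axiom $(id_H\otimes \rho)\circ\lambda=(\lambda\otimes id_H)\circ\rho$ ensures that $\rho$ restricts to a right $H$-coaction on $F(M)$. The ambient right action of $H$ on $M$ does not preserve $F(M)$, so I would equip it instead with the twisted action $v\triangleleft x := \sum S(x_{(1)}).v.x_{(2)}$: the bimodule compatibility $\lambda(y.v.z)=\Delta(y)\lambda(v)\Delta(z)$ together with the antipode axiom yields $v\triangleleft x\in F(M)$, and the Yetter-Drinfeld identity for $(F(M),\triangleleft,\rho)$ is then a direct consequence of expanding $\rho(v\triangleleft x)$ via $\rho(y.v.z)=\Delta(y)\rho(v)\Delta(z)$.

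For the reverse functor $G:\cY\cD(H)\to{}^H_H\cM^H_H$, I would set $G(V):=H\otimes V$ with
\[y.(h\otimes v) = yh\otimes v,\qquad (h\otimes v).y = \sum hy_{(1)}\otimes v.y_{(2)},\]
\[\lambda(h\otimes v) = \sum h_{(1)}\otimes h_{(2)}\otimes v,\qquad \rho(h\otimes v) = \sum h_{(1)}\otimes v_{(0)}\otimes h_{(2)}v_{(1)}.\]
The left structures act only on the first tensor factor, while the right ones blend both factors using the $H$-comultiplication together with the right $H$-module and right $H$-comodule structures of $V$. The Hopf bimodule axioms for $G(V)$ reduce to routine Sweedler computations, and the Yetter-Drinfeld compatibility on $V$ is precisely what makes the right action colinear for the right coaction.

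There remain the natural isomorphisms $F\circ G\simeq id$ and $G\circ F\simeq id$. The first is transparent: the left coinvariants of $H\otimes V$ are $1\otimes V\cong V$, and a direct calculation shows $(1\otimes v)\triangleleft x = 1\otimes(v.x)$ and $\rho(1\otimes v)=1\otimes v_{(0)}\otimes v_{(1)}$, recovering the original Yetter-Drinfeld structure on $V$. The second, which is the main obstacle, amounts to showing that for any Hopf bimodule $M$ the map
\[H\otimes F(M)\to M,\qquad h\otimes v\mapsto h.v\]
is an isomorphism of Hopf bimodules, with inverse $m\mapsto \sum m_{(-1)}\otimes \pi(m_{(0)})$, where $\pi(n):=\sum S(n_{(-1)}).n_{(0)}$ denotes the projection onto left coinvariants. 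That these maps are mutually inverse on the underlying vector space is exactly the fundamental theorem of Hopf modules applied to $M$ viewed as a left-left Hopf module (obtained by discarding the right action and right coaction). The remaining verification, that this isomorphism matches the right $H$-module and right $H$-comodule structures on each side, is a direct but somewhat tedious Sweedler computation using the bimodule axioms of $M$ and the construction of $G$.
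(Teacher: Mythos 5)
Your construction is exactly the equivalence the paper recalls (left coinvariants with the twisted action $v\triangleleft x=\sum S(x_{(1)}).v.x_{(2)}$ in one direction, $H\otimes V$ with the blended right structures in the other, and the fundamental theorem of Hopf modules for $M\cong H\otimes{}^{inv}M$), so the proposal is correct and takes essentially the same approach. The paper itself does not prove the theorem but cites Schauenburg and merely describes this same pair of quasi-inverse functors.
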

We describe for convenience the equivalence of categories involved in the previous theorem.

Let $M$ be a Hopf bimodule over $H$, with right coaction $\rho$ and left coaction $\lambda$. The space $^{inv}M = \{v \in M \tq \lambda(v) = 1 \otimes v\}$
of left-coinvariant elements of $M$ has a Yetter-Drinfeld module structure defined
as follows. We have $\rho(^{inv}M) \subset \:^{inv}M \otimes H$,
and the right coaction of $^{inv}M$ is just the restriction of $\rho$ to  $^{inv}M$.
The right module structure is defined by $w\triangleleft x =\sum S(x_{(1)}).w.x_{(2)}$.

Conversely, given a Yetter-Drinfeld module $V$, then the space $H \otimes V$ can be equipped
with a Hopf bimodule structure, with left and right actions given by:
\[x.(y \otimes v).z = \sum xyz_{(1)} \otimes v\triangleleft z_{(2)},\]
and the right ($\rho$) and left ($\lambda$) coactions given by:
\begin{align*}
\rho(x \otimes v) 
	& = \sum x_{(1)} \otimes v_{(0)} \otimes x_{(2)}v_{(1)},\\
\lambda(x \otimes v) 
	& = \sum x_{(1)} \otimes x_{(2)} \otimes v.
\end{align*}
We then have for $M \in\, ^H_H\cM ^H _H$, $M \cong H \otimes\,^{inv}M$
and for $V \in \cY\cD(H)$, $V \cong \,^{inv}(H \otimes V)$.
The equivalence of categories between $^H _H \cM ^H _H$ and
$\cY\cD(H)$ is then:
\[
\appl[\cF:]{^H _H \cM ^H _H}{\cY \cD(H)}{M}{^{inv}M,} \quad \text{with quasi-inverse }
\appl[\cG:]{\cY \cD(H)}{^H _H \cM ^H _H}{V}{H \otimes V.}
\]
A morphism $f: M \rightarrow N$ in $^H _H \cM ^H _H$ automatically satisfies
$f(^{inv}M) \subset \,^{inv}N$, and $\cF(f) : \,^{inv}M \rightarrow \,^{inv}N$ is just the restriction of $f$. Conversely, if $f : V \rightarrow W$ is a morphism
of Yetter-Drinfeld modules, then $\cG(f) = id_H \otimes f$.

\begin{definition}
Let $H$ be a Hopf algebra. A (first order) \emph{bicovariant differential calculus} $(M,d)$ over $H$ is a Hopf bimodule $M$
together with a left and right comodule morphism $d: H \rightarrow M$ such that $\forall x,y \in H, d(xy) = x.d(y)+d(x).y$ and
such that $M = \vect \{x.d(y) \tq x,y \in H \}.$

A bicovariant differential calculus $(M,d)$ is said \emph{inner}
if there exists a bi-coinvariant element $\theta \in M$
(\ie satisfying $\rho(\theta) = \theta \otimes 1$
and $\lambda(\theta) = 1 \otimes \theta)$ such that
$\forall x \in H, d(x) = \theta.x - x. \theta$.

The \emph{dimension} of a bicovariant differential calculus  $(M,d)$ is the dimension of the vector
space $^{inv}M$.

A \emph{morphism of bicovariant differential calculi} $f : (M,d_M) \rightarrow (N,d_N)$ is a morphism of Hopf bimodules
such that $f \circ d_M = d_N$.

We denote by $\cD\cC(H)$ the category of bicovariant differential calculi over $H$.
\end{definition}

Bicovariant differential calculi were introduced by Woronowicz in~\cite{wor_cd}. An overview is given in~\cite[Part IV.]{KS}.
The notion of bicovariant differential calculus has the following interpretation in terms of Yetter-Drinfeld modules.

\begin{definition}
Let $H$ be a Hopf algebra. A \emph{reduced differential calculus} over $H$
is a Yetter-Drinfeld module $V$ together with a surjective map $\omega:H \rightarrow V$ satisfying:
\[\forall x,y \in H, \:\omega(xy) = \omega(x).y + \varepsilon(x) \omega(y) \quad \text{and} \quad
\sum \omega(x)_{(0)} \otimes \omega(x)_{(1)}  = \sum \omega(x_{(2)}) \otimes S(x_{(1)})x_{(3)}.\]

A \emph{morphism of reduced differential calculi} $f: (V, \omega_V) \rightarrow (W,\omega_W)$
is a morphism of Yetter-Drinfeld modules such that $f \circ \omega_V = \omega_W$.

We denote by $\cR\cD\cC(H)$ the category of reduced differential calculi over $H$.
\end{definition}

\begin{lemme}\label{lem_rdc}
The equivalence of categories of Theorem~\ref{th_YD} induces an equivalence between
the categories $\cD\cC(H)$ and $\cR\cD\cC(H)$:
\[
\appl[\cF:]{\cD\cC(H)}{\cR\cD\cC(H)}{(M,d)}{(^{inv}M,\omega_d)} \quad
\text{with quasi-inverse }
\appl[\cG:]{\cR\cD\cC(H)}{\cD\cC(H)}{(V,\omega)}{(H \otimes V, d_\omega)}
\]
where for $x \in H$, $\omega_d(x) = \sum S(x_{(1)})d(x_{(2)})$
and $d_\omega(x) = \sum x_{(1)} \otimes \omega(x_{(2)})$.
\end{lemme}

\begin{proof}
The one-to-one correspondence between bicovariant differential calculi and
reduced differential calculi is described in~\cite[Section 14]{KS}.
We may now focus on the functoriality of this correspondence.

If $f : (M,d_M) \rightarrow (N,d_N)$ is a morphism of bicovariant differential calculi, 
then the restriction of $f$, $\cF(f) :\,^{inv}M \rightarrow \,^{inv}N$
satisfies for all $x \in H$,
\[
\cF(f) \circ \omega_{d_M}(x) = \sum f\left(S(x_{(1)})d_M(x_{(2)})\right)
	= \sum S(x_{(1)})f(d_M(x_{(2)})) = \sum S(x_{(1)}) d_N(x_{(2)})
	= \omega_{d_N}(x).
\]
Hence $\cF(f)$ is a morphism of reduced differential calculi.

Conversely, if $f: (V, \omega_V) \rightarrow (W, \omega_W)$ is a morphism
of reduced differential calculi, then
\[(id_H \otimes f) \circ d_{\omega_V} (x) = \sum x_{(1)} \otimes f( \omega_V(x_{(2)}))
	=  \sum x_{(1)} \otimes \omega_W(x_{(2)}) = d_{\omega_W}(x).
\]
Thus $\cG(f) = id_H \otimes f$ is a morphism of bicovariant differential calculi.
 
Since $\cF$ and $\cG$ are quasi-inverse to each other between the categories $\cY\cD(H)$
and $^H _H \cM ^H _H$, it only remains to check 
that the natural transformation providing the equivalence
$\cF \circ \cG \cong id$ (respectively $\cG \circ \cF \cong id$) consists of morphisms
of reduced (respectively bicovariant) differential calculi.
Let $(V, \omega)$ be a reduced differential calculus over $H$.
The isomorphism of Yetter-Drinfeld modules
\[
	\appl[\theta :]{\cF \circ \cG(V) =\, ^{inv}(H \otimes V)}{V}{x \otimes v}{\varepsilon(x)v} 
\]
satisfies for $x \in H$,
\[
\theta \circ \omega_{d_\omega} (x) = \theta\left(\sum S(x_{(1)}) d_{\omega} (x_{(2)})\right)
	= \theta \left(\sum  S(x_{(1)}) x_{(2)} \otimes \omega (x_{(3)})\right)
	= \theta(1 \otimes \omega(x)) = \omega(x).
\]
Thus $\theta$ is an isomorphism of reduced differential calculi.

 Conversely, let $(M,d)$ be a bicovariant differential calculus over $H$. The isomorphism
of Hopf bimodules
\[
	\appl[\gamma :]{\cG \circ \cF (M) = H \otimes \,^{inv}M}{M}{x \otimes  v}{x.v}
\]
satisfies for $x \in H$,
\[
	\gamma \circ d_{\omega_d}(x)
	= \gamma \left( \sum x_{(1)} \otimes \omega_d(x_{(2)}) \right)
	=  \gamma \left( \sum x_{(1)} \otimes S(x_{(2)}) d (x_{(3)}) \right)
	= \sum \varepsilon(x_{(1)}) d(x_{(2)}) = d(x).
\]
Hence $\gamma$ is a morphism of bicovariant differential calculi, which ends the proof.
\end{proof}

\begin{rmq}\label{rmq_im}
Let $V$ be a Yetter-Drinfeld module, and let $\omega : H \rightarrow V$ be a map
satisfying all the axioms of a reduced differential calculus, except the surjectivity condition.
Then $\im(\omega)$ is a Yetter-Drinfeld submodule of $V$. Indeed, we have
$\omega(x).y = \omega(xy)- \varepsilon(x)\omega(y) =
\omega(xy-\varepsilon(x)y) \in \im(\omega) $ for all $x, y \in H$, thus
$\im(\omega)$ is a submodule of $V$, and
$\sum \omega(x)_{(0)} \otimes \omega(x)_{(1)}  = \sum \omega(x_{(2)}) \otimes S(x_{(1)})x_{(3)} \in \im(\omega) \otimes H$, thus
$\im(\omega)$ is a subcomodule of $V$.
\end{rmq}

\begin{definition}
A reduced differential calculus $(V, \omega)$ is said \emph{inner}
if there exists a coinvariant element $\theta \in V$
(\ie satisfying $\rho(\theta) = \theta \otimes 1$) such that
$\forall x \in H,$ $\omega(x) = \theta.x - \varepsilon(x) \theta$.

A reduced differential calculus $\omega : H \rightarrow V$ is said \emph{simple}
if $V$ is a simple Yetter-Drinfeld module. That is to say, if there is no non-trivial subspace
$W \subset V$, which is both a submodule and a subcomodule of $V$.

Let $(V,\omega)$, $(W_1, \omega_1)$,
($W_2, \omega_2)$ be reduced differential calculi. We say that
$(V, \omega)$ is the direct sum of  $(W_1, \omega_1)$ and
$(W_2, \omega_2)$ and we write $(V,\omega) = ( W_1, \omega_1) \oplus
(W_2, \omega_2)$, if $V = W_1 \oplus W_2$ and if
for all $x \in H$, $\omega(x ) = (\omega_1(x),\omega_2(x))$.
\end{definition}

Note that the direct sum of reduced differential calculi is not always well defined.
The problem is that if $(V, \omega_V)$ and $(W, \omega_W)$ are reduced differential calculi 
over a Hopf algebra $H$, then the map 
\[
	\appl[\omega:]{ H}{V \oplus W}{x}{(\omega_V(x), \omega_W(x))}
\]
can fail to be surjective. We give in the next lemma a necessary and a sufficient condition for the existence of the direct sum
of simple reduced differential calculi.

\begin{lemme}\label{lem_sum}
Let $(V_1, \omega_1), \ldots , (V_n, \omega_n)$ be simple reduced differential calculi over a Hopf algebra $H$. We set
\[
	\appl[\omega:]{H}{V =\bigoplus\limits_{i =1}^n V_i}{x}{(w_1(x), \ldots , w_n(x))}.
\]

If the $V_i$'s are two-by-two non isomorphic as Yetter-Drinfeld modules,
then $(V,\omega)$ is a reduced differential calculus.

Conversely, if $(V,\omega)$ is a reduced differential calculus, then the reduced differential calculi
 $(V_i, \omega_i)$
are two-by-two non-isomorphic.
\end{lemme}

\begin{proof}
The map $\omega$ clearly satisfies all the axioms of a reduced differential calculus, except the surjectivity condition.
In order to prove the lemma, we thus have to examine under which conditions $\omega$ is onto.
Assume that the $V_i$'s are two-by-two non-isomorphic (as Yetter-Drinfeld modules).
According to Remark~\ref{rmq_im}, the image of $\omega$ is a Yetter-Drinfeld submodule of $V$. 
There is therefore a subset
$\cI \subset \{1, \ldots, n\}$ such that there exists an isomorphism of Yetter-Drinfeld modules
$f : \im(\omega) \rightarrow \bigoplus\limits_{i\in\cI} V_i$.
For $k \in \{1, \ldots, n\}$, we denote by $\pi_k :V \rightarrow V_k$ the canonical projection. 
The map $\pi_k \circ \omega = \omega_k$ is onto, thus the restriction of
$\pi_k$ to $\im(\omega)$ is also onto.
This means that $\pi_k$ induces a non-zero morphism of Yetter-Drinfeld modules
$\bigoplus\limits_{i\in\cI} V_i \rightarrow V_k$, hence an isomorphism
of Yetter-Drinfeld modules $V_l \cong V_k$, with $l \in \cI$.
Since by hypothesis the $V_i$'s are two-by-two non-isomorphic,
we have $k= l$, hence $k \in \cI$. Thus $\cI = \{1, \ldots, n\}$, $\im(\omega) = V$, and we 
conclude that $\omega: H \rightarrow V$ is a reduced differential calculus.

Assume now that there is an isomorphism $f : (V_j,\omega_j) \rightarrow (V_i, \omega_i)$
with $i \neq j$. 
We denote by $\eta : H \rightarrow V_i \oplus V_i$ the map defined by the composition
\[\xymatrixcolsep{4em}\xymatrix{
	 H \ar[r]^-\omega & V \ar[r]^-{\pi_i \oplus \pi_j} & V_i \oplus V_j \ar[r]^-{id \oplus f} & V_i \oplus V_i .
}\] 
The map $\eta$ is clearly not surjective, since $\eta (x) = (\omega_i(x) , f\circ \omega_j(x)) = (\omega_i(x), \omega_i(x))$.
This implies that $\omega$ is not surjective,
since $\pi_i \oplus \pi_j$ and $id \oplus f$ are both surjective.
\end{proof}

\begin{lemme}\label{lem_simple}
Let $V$ be a simple Yetter-Drinfeld module over a Hopf algebra $H$, admitting a
non-zero right-coinvariant element $\theta \in V$.
If $V$ is not isomorphic to the Yetter-Drinfeld module $\CC_\varepsilon$
(of Example~\ref{ex_eps}),
then the map
\[
	\appl[\omega_\theta:]{H}{V}{x}{\theta.x-\varepsilon(x)\theta}
\]
defines a reduced differential calculus over $H$.
\end{lemme}

\begin{proof}
We have for $x \in H$,
\begin{align*}
\omega_\theta (x).y + \varepsilon(x)\omega_\theta(y)
	&= (\theta.x - \varepsilon(x)\theta).y + \varepsilon(x)(\theta.y-\varepsilon(y)\theta) \\
	& = (\theta.x).y - \varepsilon(x)\varepsilon(y)\theta = \omega_\theta (xy)
\end{align*}
and 
\begin{align*}
\rho \circ  \omega_\theta (x)& = \rho(\theta.x) -\varepsilon(x) \theta \otimes 1 \\
	&= \sum \theta.x_{(2)} \otimes S(x_{(1)}).1.x_{(3)}  -\varepsilon(x) \theta \otimes 1 \\
	& =  \sum \theta.x_{(2)} \otimes S(x_{(1)})x_{(3)} 
		- \sum \varepsilon(x_{(2)})\theta \otimes S(x_{(1)})x_{(3)} \\
	& = \sum \omega_\theta( x_{(2)}) \otimes S(x_{(1)})x_{(3)}.
\end{align*}
By Remark~\ref{rmq_im}, the image of $\omega_\theta$ is thus a
Yetter-Drinfeld submodule of $V$. Since $V$ is simple, the image of $\omega_\theta$ is either $V$, in which case
$\omega_\theta$ is indeed a reduced differential calculus, or $\im(\omega_\theta) = (0)$.
In that case, since $\theta$ is coinvariant and $\theta . x = \varepsilon(x) \theta$ for all $x \in H$, the map
$\mu : \CC_\varepsilon  \rightarrow V$ given by $\mu(\lambda) = \lambda \theta$
is a non-zero morphism between simple Yetter-Drinfeld modules, hence an isomorphism.
\end{proof}

The end of this section is devoted to the proof of the following lemma.

\begin{lemme}\label{lem_inner}
Let $H$ be a Hopf algebra such that the category $\cY\cD_f(H)$ is semisimple
(\ie each finite dimensional Yetter-Drinfeld module over $H$
can be decomposed into a direct sum of simple Yetter-Drinfeld modules).
Then each finite dimensional reduced differential calculus over $H$ is inner.
\end{lemme}

\begin{definition}
Let $(V,\omega)$ be a reduced differential calculus.
We denote by $V_\omega$ the Yetter-Drinfeld module over $H$ defined as follows.
As a right comodule, $V_\omega = V \oplus \CC$ (where the $H$-comodule structure
on $\CC$ is the canonical one: $\lambda \to \lambda \otimes 1$).
Its right module structure is defined for $v \in V$, $\lambda \in \CC$ and $x \in H$ by:
$(v, \lambda) .x = (v.x + \lambda \omega(x), \lambda \varepsilon(x))$.
Let us check that this formula defines an $H$-module structure on $V \oplus \CC$.
We have
\begin{align*}
((v, \lambda). x).y &=  (v.x + \lambda \omega(x), \lambda \varepsilon(x)).y
	=  ((v.x + \lambda \omega(x)).y + \lambda \varepsilon(x) \omega(y),
\lambda \varepsilon(x)\varepsilon(y)) \\
	& = (v.(xy) + \lambda \omega(xy), \lambda \varepsilon(xy)) = (v,\lambda).(xy),
\end{align*}
and the other axioms of a right module are clearly satisfied.
Before checking that the Yetter-Drinfeld condition is satisfied on $V_\omega$,
let us note that, denoting by $j : V \rightarrow V \oplus \CC$ the canonical
injection, and by $p: V \oplus \CC \rightarrow \CC_\varepsilon$ the canonical
projection, then clearly $j$ and $p$ are both module and comodule
maps, and the short sequence:
\[{
\xymatrix   {
	0 \ar[r]& V \ar[r]^-j & V_\omega \ar[r]^-p &  \CC_\varepsilon \ar[r] &0 
}}
\]
is exact.
Since $V$ is a Yetter-Drinfeld module and $j : V \rightarrow V_\omega$ is a module and comodule morphism, 
the Yetter-Drinfeld condition:
\[\forall x \in H, \rho(w.x) = \sum w _{(0)}.x _{(2)} \otimes S(x _{(1)}) w _{(1)} x _{(3)}\]
is automatically satisfied for $w \in j(V)$.
Hence it only remains to check that
the Yetter-Drinfeld condition is also satisfied on $\CC$, that is, that for all $x$ in $H$,
$\rho((0, 1).x) = \sum (0, 1).x_{(2)} \otimes S(x_{(1)})x_{(3)}$.
We have for $x \in H$
\begin{align*}
	\sum (0, 1).x_{(2)} \otimes S(x_{(1)})x_{(3)} 
		&= \sum (\omega(x_{(2)}), \varepsilon(x_{(2)}))  \otimes S(x_{(1)})x_{(3)} \\
		& =  (j\otimes id) \left(\sum \omega(x_{(2)})  \otimes S(x_{(1)})x_{(3)}\right)  + (0,1) \otimes \varepsilon(x) \\
		& = (j \otimes id) \circ  \rho (\omega(x)) + (0,1) \otimes \varepsilon (x) \\	
	&=  \rho(j(\omega(x))) +  \rho(0, \varepsilon(x))
	= \rho( \omega(x),  \varepsilon(x))  = \rho((0,1).x),
\end{align*}
hence $V_\omega$ is indeed a Yetter-Drinfeld module, 
and
\[{
\xymatrix   {
	0 \ar[r]& V \ar[r]^-j & V_\omega \ar[r]^-p &  \CC_\varepsilon \ar[r] &0 
}}
\]
is a short exact sequence of Yetter-Drinfeld modules.
\end{definition}

\begin{lemme}
A  reduced differential calculus
$(V,\omega)$ is inner  if and only if the short exact sequence
of Yetter-Drinfeld modules
\[{
\xymatrix   {
	0 \ar[r]& V \ar[r]^-j & V_\omega \ar[r]^-p &  \CC_\varepsilon \ar[r] &0 
}}
\]
splits.
\end{lemme}

\begin{proof}
Assume first that $(V,\omega)$ is inner.
Let $\theta \in V$ be a right-coinvariant element such that $\omega = x \mapsto \theta.x - \varepsilon(x)\theta$.
We set
\[
 \appl[r:]{V_\omega}{V}{(v, \lambda)}{v + \lambda \theta.}
\]
It is a comodule morphism since for $v \in V$, $ \lambda \in \CC$,
\begin{align*}
 (r \otimes id) \circ \rho(v, \lambda)  & = (r \otimes id) \circ \rho \circ j (v)
	+(r \otimes id )((0,\lambda) \otimes 1)
	= ((r \circ j) \otimes id) \circ \rho (v) +\lambda \theta \otimes 1 \\
	& = \rho(v) + \lambda \rho(\theta) =\rho \circ r (v,\lambda).
\end{align*}
And we have
for $v \in V$, $\lambda \in \CC$ and $x \in H$,
\[
r((v, \lambda).x) = r(v.x +\lambda \omega(x), \lambda\varepsilon(x))
	= v.x + \lambda \theta.x -\lambda\varepsilon(x)\theta + \lambda\varepsilon(x)\theta
	= (v+\lambda \theta).x = r(v, \lambda).x.
\]
 Hence $r$ is a Yetter-Drinfeld module morphism satisfying $r\circ j = id_V$, so that the above sequence
splits.

Assume conversely that the short exact sequence of Yetter-Drinfeld modules
associated to $(V,\omega)$ splits:
\[{
\xymatrix   {
	0 \ar[r]& V \ar[r]^-j & V_\omega =V \oplus \CC \ar@/^1.1pc/[l]^-r\ar[r]^-p  &  \CC_\varepsilon  \ar[r] &0 .
}}
\]
We set $\theta = r(0,1)$.
Then $\rho(\theta) =\rho\circ r(0,1)
= (r\otimes id)\circ \rho (0,1) = \theta \otimes 1$ and for $x \in H$,
\[
	\theta.x -\varepsilon(x)\theta = r((0,1).x) - r(0, \varepsilon(x))
		= r\left((\omega(x), \varepsilon(x))-(0, \varepsilon(x))\right) = r(j(\omega(x))) = \omega(x).
\]
Hence the result.
\end{proof}

Lemma~\ref{lem_inner} follows immediately.

\section{Monoidal equivalence}

We show in this section that if two Hopf algebras are monoidally equivalent,
then their categories of bicovariant differential calculi are also equivalent.
In order to describe the equivalence between the categories $\cD\cC(H)$ and
$\cD\cC(L)$, when $H$ and $L$ are monoidally equivalent Hopf algebras, we
will need some definitions and results about cogroupoids, which we recall here.
We refer to~\cite{HG_co} for a survey on the subject. 
\begin{definition}
A \emph{cocategory} $\cC$ consists of:
\begin{itemize}
\item a set of objects $\ob(\cC)$,
\item for all $X,Y \in \ob(\cC)$, an algebra $\cC(X,Y)$,
\item for all $X,Y,Z \in \ob(\cC)$, algebra morphisms $\Delta_{X,Y}^Z : \cC(X,Y) \rightarrow \cC(X,Z) \otimes \cC(Z,Y)$
and $\varepsilon_X : \cC(X,X) \rightarrow \CC$ such that for all $X,Y,Z,T \in \ob(\cC)$, 
the following diagrams commute:
\[{
\xymatrix  @!0 @R=1.5cm @C6.5cm {
	\cC(X,Y) \ar[r]^-{\Delta_{X,Y}^Z} \ar[d]_{\Delta_{X,Y}^T} & \cC(X,Z) \otimes \cC(Z,Y) \ar[d]^{id \otimes \Delta_{Z,Y}^T}  \\
	\cC(X,T) \otimes \cC(T,Y) \ar[r]_-{\Delta_{X,T}^Z \otimes id} &  \cC(X,Z) \otimes \cC(Z,T) \otimes \cC(T,Y) 
}}\]  
\[
\xymatrix   {
	\cC(X,Y) \ar@{=}[rd]  \ar[r]^-{\Delta_{X,Y}^X} & \cC(X,X) \otimes \cC(X,Y) \ar[d]^{\varepsilon_X \otimes id}  \\
	 &  \cC(X,Y) 
} \qquad \quad
\xymatrix   {
	\cC(X,Y) \ar@{=}[rd]  \ar[r]^-{\Delta_{X,Y}^Y} & \cC(X,Y) \otimes \cC(Y,Y) \ar[d]^{id \otimes \varepsilon_Y}  \\
	 &  \cC(X,Y) 
}
\]  
\end{itemize}
A cocategory is said to be \emph{connected} if for all $X, Y \in \ob(\cC)$, $\cC(X,Y)$ is a
non-zero algebra.
\end{definition}

\begin{definition}
A \textit{cogroupoid} $\cC$ is a cocategory equipped with linear maps $S_{X,Y} : \cC(X,Y) \rightarrow \cC(Y,X)$ 
such that for all $X,Y \in \ob(\cC)$, the following diagrams commute:
\[{
\xymatrix  @!0 @R=1.5cm @C2.6cm {
	\cC(X,X) \ar[r]^-{\varepsilon_X} \ar[d]_{\Delta_{X,X}^Y} & \CC \ar[r]^-{u} &  \cC(Y,X)  \\
	\cC(X,Y) \otimes \cC(Y,X) \ar[rr]_-{S_{X,Y} \otimes id} & &  \cC(Y,X) \otimes \cC(Y,X)  \ar[u]^m
}}
\]
\[
{
\xymatrix  @!0 @R=1.5cm @C2.6cm {
	\cC(X,X) \ar[r]^-{\varepsilon_X} \ar[d]_{\Delta_{X,X}^Y} & \CC \ar[r]^-{u} &  \cC(X,Y)  \\
	\cC(X,Y) \otimes \cC(Y,X) \ar[rr]_-{id \otimes S_{Y,X}} & &  \cC(X,Y) \otimes \cC(X,Y)  \ar[u]^m
}}
\] 
where $m$ denotes the multiplication and $u$ the unit.
\end{definition}

We will use Sweedler notations for cogroupoids:
\[ \text{for } a^{X,Y} \in \cC(X,Y), \;\Delta_{X,Y}^Z(a^{X,Y}) = \sum a_{(1)}^{X,Z} \otimes a_{(2)}^{Z,Y}.\]

\begin{theoreme}[{\cite[Proposition 1.16 and Theorem 6.1]{HG_co}}]\label{bic_YD}
Let $H$ and $L$ be two Hopf algebras such that there exists a linear monoidal equivalence
between their categories of right comodules $\cM^H$ and $\cM^L$. Then there exists a linear monoidal equivalence between $\cY\cD(H)$ and $\cY\cD(L)$, inducing 
an equivalence between the categories of finite dimensional Yetter-Drinfeld modules $\cY\cD_f(H)$ and $\cY\cD_f(L).$
\end{theoreme}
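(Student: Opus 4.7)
The plan is to work in the cogroupoid framework of~\cite{HG_co}. A result of Schauenburg, reformulated via cogroupoids (Proposition 1.16 of~\cite{HG_co}), asserts that a linear monoidal equivalence between $\cM^H$ and $\cM^L$ comes from a connected cogroupoid $\cC$ with $\ob(\cC) = \{X, Y\}$, $\cC(X,X) = H$, and $\cC(Y,Y) = L$. The algebras $\cC(X,Y)$ and $\cC(Y,X)$ play the role of bi-Galois objects, and the monoidal equivalence is implemented by the cotensor product functors $- \cotens_H \cC(X,Y) : \cM^H \rightarrow \cM^L$ and $- \cotens_L \cC(Y,X) : \cM^L \rightarrow \cM^H$. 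I would take this correspondence as the starting point.

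The next step is to lift these comodule functors to Yetter-Drinfeld modules. For $V \in \cY\cD(H)$, the vector space $\tilde{V} = V \cotens_H \cC(X,Y)$ already inherits a right $L$-coaction from the right $L$-coaction on $\cC(X,Y)$. One must then equip $\tilde{V}$ with an $L$-action mimicking the usual Yetter-Drinfeld formula. For $a \in L = \cC(Y,Y)$, the iterated cogroupoid coproduct splits $a$ into three Sweedler components lying respectively in $\cC(Y,X)$, $\cC(X,X) = H$, and $\cC(X,Y)$; these three components act on $v \cotens z$ respectively by $S_{Y,X}$ on the left of $z$, by the $H$-action on $v$, and by right multiplication on $z$. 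This is the transcription to the cogroupoid setting of the identity $(v.x)_{(0)} \otimes (v.x)_{(1)} = v_{(0)}.x_{(2)} \otimes S(x_{(1)}) v_{(1)} x_{(3)}$ defining Yetter-Drinfeld modules.

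The main obstacle is to verify that this prescription is well defined on the cotensor product, is an $L$-action, and actually satisfies the Yetter-Drinfeld compatibility on $\tilde{V}$. Each of these statements reduces, after careful bookkeeping with the Sweedler notation for cogroupoids, to one of the cogroupoid axioms (coassociativity of $\Delta_{X,Y}^Z$, counitality, or the antipode identities) combined with the Yetter-Drinfeld condition for $V$. The computation is direct but requires tracking several copies of $\cC$ simultaneously; the cogroupoid formalism is precisely what allows these manipulations to remain formal.

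Once $F = - \cotens_H \cC(X,Y)$ is shown to yield a functor $\cY\cD(H) \rightarrow \cY\cD(L)$, the symmetric construction $G = - \cotens_L \cC(Y,X)$ gives a functor in the reverse direction. The natural isomorphisms $F \circ G \cong id$ and $G \circ F \cong id$ follow from the Galois-type isomorphisms $\cC(X,Y) \cotens_L \cC(Y,X) \cong H$ and $\cC(Y,X) \cotens_H \cC(X,Y) \cong L$ established in~\cite{HG_co}, together with a short check that these isomorphisms are compatible with the module structures just introduced. Since on underlying comodules $F$ agrees with the original monoidal equivalence, which preserves finite dimensionality, $F$ restricts to an equivalence $\cY\cD_f(H) \simeq \cY\cD_f(L)$.
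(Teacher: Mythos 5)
Your proposal follows exactly the route the paper takes: this theorem is quoted from \cite{HG_co}, and the construction you describe (Schauenburg's result recast as a connected cogroupoid with $\cC(X,X)\cong H$, $\cC(Y,Y)\cong L$, the cotensor functor $V\mapsto V\cotens_H\cC(X,Y)$ with the three-fold Sweedler splitting of $b^{Y,Y}$ defining the action, and the quasi-inverse $-\cotens_L\cC(Y,X)$) is precisely the equivalence the paper recalls immediately after the statement. The verifications you defer to ``careful bookkeeping'' are the content of the cited Theorem 6.1 of \cite{HG_co}, so the sketch is correct and matches the intended proof.
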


Let us recall the construction of this equivalence.
As a consequence of~\cite{Sch_big}, restated in the context of cogroupoids, the existence of a  linear monoidal equivalence between the categories
$\cM^H$ and $\cM^L$ is equivalent to the existence of a connected cogroupoid $\cC$
and two objects $X, Y \in \ob (\cC)$
such that $H \cong \cC(X, X)$ and $L \cong \cC(Y,Y)$ (see~\cite[Theorem 2.10]{HG_co}).
Then the equivalence between the categories  $\cY\cD(H)$ and $\cY\cD(L)$
is given by the functor:
\[
	\appl[\cF_X^Y:]{\cY\cD(\cC(X,X))}{\cY\cD(\cC(Y,Y))}{V}{V  \cotens\limits_{\cC(X,X)} \cC(X,Y),}
\]
where 
\[V \! \cotens\limits_{\cC(X,X)}\! \cC(X,Y) 
= \left\{ \sum\limits_i v_i \otimes a_i^{X,Y}  \!\in V \otimes \cC(X,Y) \, ;
	\sum v_{i(0)} \otimes v_{i(1)}^{X,X} \otimes a_i^{X,Y}
	= \sum v_i \otimes a_{i(1)}^{X,X} \otimes a_{i(2)}^{X,Y} \right\}.\]
The right $L \cong \cC(Y,Y)$-module structure of  $V  \cotens\limits_{\cC(X,X)} \cC(X,Y) $
is given by:
\[
	\left(\sum_i v_i\otimes a_i^{X,Y}\right) \triangleleft b^{Y,Y}
	= \sum_i v_i.b_{(2)}^{X,X} \otimes S_{Y,X}(b_{(1)}^{Y,X})a_i b_{(3)}^{X,Y}
\]
and its right comodule structure is given by the map $id_V \otimes \Delta_{X,Y}^Y$.
The quasi-inverse of $\cF_X^Y$ is the functor $\cF_Y^X$.
By~\cite[Proposition 1.16]{HG_co}, the functor $\cF_X^Y$ induces
an equivalence between the categories of finite dimensional Yetter-Drinfeld modules $\cY\cD_f(H)$ and $\cY\cD_f(L).$

\begin{lemme}
Let $\cC$ be a cogroupoid and let $X,Y$ be in $\ob(\cC)$ such that $\cC(Y,X) \neq (0)$.
Let  $\omega : \cC(X,X) \rightarrow V$ be a reduced differential calculus over $\cC(X,X)$.
The map
\[
	 \appl[\overline{\omega}:]{\cC(Y,Y)}{V \cotens\limits_{\cC(X,X)} \cC(X,Y)}{a^{Y,Y}}
{ \sum \omega(a_{(2)}^{X,X}) \otimes S_{Y,X}(a_{(1)}^{Y,X})a_{(3)}^{X,Y}}
\]
is a reduced differential calculus over $\cC(Y,Y)$.
\end{lemme}

\begin{proof}
We already know, by the previous theorem, that $V \cotens\limits_{\cC(X,X)} \cC(X,Y)$
is a Yetter-Drinfeld module over $\cC(Y,Y)$. We firstly have to check 
that the map $\overline{\omega}$ is well defined, which is to say, we have to
check that
\[
\sum \omega(a_{(2)}^{X,X})_{(0)} \otimes \omega(a_{(2)}^{X,X})_{(1)} \otimes S_{Y,X}(a_{(1)}^{Y,X})a_{(3)}^{X,Y} 
	=  \sum \omega(a_{(2)}^{X,X}) \otimes \Delta_{X,Y}^X \left(S_{Y,X}(a_{(1)}^{Y,X})a_{(3)}^{X,Y}\right). \]
On the one hand, we have:
\begin{align*}
\sum \omega(a_{(2)}^{X,X})_{(0)} \otimes \omega(a_{(2)}^{X,X})_{(1)} \otimes S_{Y,X}(a_{(1)}^{Y,X})a_{(3)}^{X,Y}
	= \sum \omega(a_{(3)}^{X,X}) \otimes S_{X,X}(a_{(2)}^{X,X})a_{(4)}^{X,X} \otimes S_{Y,X}(a_{(1)}^{Y,X})a_{(5)}^{X,Y}.
\end{align*}
And on the other hand,
\begin{align*}
\Delta_{X,Y}^X \left(S_{Y,X}(a^{Y,X})b^{X,Y}\right) 
	&= \Delta_{X,Y}^X (S_{Y,X}(a^{Y,X}))\Delta_{X,Y}^X (b^{X,Y}) \\
	& =\sum \left(S_{X,X}(a_{(2)}^{X,X}) \otimes S_{Y,X}(a_{(1)}^{Y,X})\right)
		\left(b_{(1)}^{X,X} \otimes b_{(2)}^{X,Y} \right) \\
	& = \sum S_{X,X}(a_{(2)}^{X,X})b_{(1)}^{X,X} \otimes S_{Y,X}(a_{(1)}^{Y,X})b_{(2)}^{X,Y}
\end{align*}
so that 
\begin{align*}
\sum \omega(a_{(2)}^{X,X}) \otimes \Delta_{X,Y}^X \left(S_{Y,X}(a_{(1)}^{Y,X})a_{(3)}^{X,Y}\right)
	= \sum  \omega(a_{(3)}^{X,X}) \otimes
		S_{X,X}(a_{(2)}^{X,X})a_{(4)}^{X,X} \otimes S_{Y,X}(a_{(1)}^{Y,X})a_{(5)}^{X,Y}
\end{align*}
which shows that $\overline{\omega} : \cC(Y,Y) \rightarrow V \cotens\limits_{\cC(X,X)} \cC(X,Y)$
is well defined.

We have
\begin{align*}
\overline{\omega}(a^{Y,Y})  \triangleleft b^{Y,Y} 
 & =  \sum \left(\omega(a_{(2)}^{X,X}) \otimes S_{Y,X}(a_{(1)}^{Y,X})a_{(3)}^{X,Y}\right)
 \triangleleft b^{Y,Y} \\
	& = \sum \omega(a_{(2)}^{X,X}).b_{(2)}^{X,X} \otimes S_{Y,X}(b_{(1)}^{Y,X}) S_{Y,X}(a_{(1)}^{Y,X})a_{(3)}^{X,Y} b_{(3)}^{X,Y}.
\end{align*}
Consequently, we have
\begin{align*}
\overline{\omega}(a^{Y,Y}b^{Y,Y}) 
	& =  \sum \omega(a_{(2)}^{X,X}b_{(2)}^{X,X}) \otimes S_{Y,X}(a_{(1)}^{Y,X}b_{(1)}^{Y,X})a_{(3)}^{X,Y}b_{(3)}^{X,Y} \\
	& = \sum \omega(a_{(2)}^{X,X}).b_{(2)}^{X,X} \otimes S_{Y,X}(a_{(1)}^{Y,X}b_{(1)}^{Y,X})a_{(3)}^{X,Y}b_{(3)}^{X,Y} \\
		& \qquad +  \sum \varepsilon_X(a_{(2)}^{X,X}) \omega(b_{(2)}^{X,X}) \otimes S_{Y,X}(a_{(1)}^{Y,X}b_{(1)}^{Y,X})a_{(3)}^{X,Y}b_{(3)}^{X,Y} \\
	& = \overline{\omega}(a^{Y,Y})  \triangleleft b^{Y,Y} 
		+\sum  \omega(b_{(2)}^{X,X}) \otimes S_{Y,X}
(b_{(1)}^{Y,X})S_{Y,X}(a_{(1)}^{Y,X})a_{(2)}^{X,Y}b_{(3)}^{X,Y} \\
	& =  \overline{\omega}(a^{Y,Y})  \triangleleft b^{Y,Y} 
		+ \varepsilon_Y(a^{Y,Y}) \overline{\omega}(b^{Y,Y}).
\end{align*}
Denoting $\rho = id_V \otimes \Delta_{X,Y}^{Y}$ the $\cC(Y,Y)$-comodule
structure of $V \cotens\limits_{\cC(X,X)} \cC(X,Y)$, we have
for all $a^{Y,Y} \in \cC(Y,Y)$,
\begin{align*}
\rho \circ \overline{\omega}(a^{Y,Y}) 
	& =\sum \omega(a_{(2)}^{X,X}) \otimes \Delta_{X,Y}^Y \left(S_{Y,X}(a_{(1)}^{Y,X})a_{(3)}^{X,Y}\right) \\
	&= \sum  \omega(a_{(3)}^{X,X}) \otimes
	S_{Y,X}(a_{(2)}^{Y,X})a_{(4)}^{X,Y} \otimes S_{Y,Y}(a_{(1)}^{Y,Y})a_{(5)}^{Y,Y} \\
	& = \sum \overline{\omega}(a_{(2)}^{Y,Y}) \otimes S_{Y,Y}(a_{(1)}^{Y,Y})a_{(3)}^{Y,Y}.
\end{align*}

Now, in order to prove the lemma, it only remains to check that $\overline{\omega}$ is onto.
Let $\sum\limits_{i} v_i \otimes a_i^{X,Y}$ be in $ V  \cotens\limits_{\cC(X,X)} \cC(X,Y)$ and let $\varphi : \cC(Y,X) \rightarrow \CC$
be a linear map satisfying $\varphi(1) = 1$. We have
\[
	\sum\limits_{i} v_{i(0)} \otimes v_{i(1)}^{X,X}  \otimes a_i^{X,Y}
		= \sum\limits_{i} v_{i} \otimes a_{i(1)}^{X,X}  \otimes a_{i(2)}^{X,Y}
\]
since $\sum\limits_{i} v_i \otimes a_i^{X,Y}$ is in $ V  \cotens\limits_{\cC(X,X)} \cC(X,Y)$.
Applying $id_V \otimes \Delta_{X,X}^Y \otimes id_{\cC(X,Y)}$ on both sides, we find
\[
	\sum\limits_{i} v_{i(0)} \otimes v_{i(1)}^{X,Y} \otimes v _{i(2)}^{Y,X} \otimes a_i^{X,Y}
		= \sum\limits_{i} v_{i} \otimes a_{i(1)}^{X,Y}  \otimes a_{i(2)}^{Y,X}  \otimes a_{i(3)}^{X,Y}.
\]
This shows that
\begin{align*}
\sum\limits_{i}\varphi \left( v _{i(2)}^{Y,X} S_{X,Y}( a_i^{X,Y})\right) v_{i(0)} \otimes v_{i(1)}^{X,Y}
	& =  \sum\limits_{i} \varphi \left(  a_{i(2)}^{Y,X}  S_{X,Y}( a_{i(3)}^{X,Y}) \right) v_{i} \otimes a_{i(1)}^{X,Y} \\
	& =   \sum\limits_{i}\varepsilon_{Y} ( a_{i(2)}^{Y,Y})  v_{i} \otimes a_{i(1)}^{X,Y}
		= \sum\limits_{i}  v_{i} \otimes a_{i}^{X,Y}.
\end{align*}
Since $\omega : \cC(X,X) \rightarrow V$ is onto, there exists $b_i^{X,X} \in \cC(X,X)$ such that
$\omega(b_i^{X,X}) = v_i$.

We have then
\[
\sum v_{i(0)} \otimes v_{i(1)}^{X,X}  = \sum \omega(b_i^{X,X})_{(0)} \otimes  \omega(b_i^{X,X})_{(1)} 
	 = \sum  \omega(b_{i(2)}^{X,X}) \otimes S_{X,X}(b_{i(1)}^{X,X})b_{i(3)}^{X,X},
\]
so that 
\begin{align*}
\sum v_{i(0)} \otimes v_{i(1)}^{X,Y} \otimes v_{i(2)}^{Y,X}
	& =\sum \omega(b_{i(3)}^{X,X}) \otimes
S_{Y,X}(b_{i(2)}^{Y,X})b_{i(4)}^{X,Y} \otimes S_{X,Y}(b_{i(1)}^{X,Y})b_{i(5)}^{Y,X} .
\end{align*}

We have therefore
\begin{align*}
\sum\limits_{i}  v_{i} \otimes a_{i}^{X,Y}
	& = \sum_{i}\varphi \left( v _{i(2)}^{Y,X} S_{X,Y}( a_i^{X,Y})\right) v_{i(0)} \otimes v_{i(1)}^{X,Y} \\
	& =  \sum_{i}\varphi \left(S_{X,Y}(b_{i(1)}^{X,Y})b_{i(5)}^{Y,X} S_{X,Y}( a_i^{X,Y})\right)
		 \omega(b_{i(3)}^{X,X}) \otimes S_{Y,X}(b_{i(2)}^{Y,X})b_{i(4)}^{X,Y} \\
	& = \sum_{i}\varphi \left(S_{X,Y}(b_{i(1)}^{X,Y})b_{i(3)}^{Y,X} S_{X,Y}( a_i^{X,Y})\right)
		\overline{ \omega}(b_{i(2)}^{Y,Y})
\end{align*}
which allows to conclude that $\overline{\omega}$ is onto.
\end{proof}

\begin{rmq}\label{rmq_inner}
If $\omega : \cC(X,X) \rightarrow V$ is an inner reduced differential calculus,
let $\theta \in V$ be a right-coinvariant element such that
$\forall a^{X,X} \in \cC(X,X)$, 
$\omega(a^{X,X}) = \theta.a^{X,X} - \varepsilon_X(a^{X,X})\theta$.
We then have
\begin{align*}
\forall a^{Y,Y} \in \cC(Y,Y), \;\overline{\omega}(a^{Y,Y})
	&= \sum \omega(a_{(2)}^{X,X}) \otimes S_{Y,X}(a_{(1)}^{Y,X})a_{(3)}^{X,Y} \\
	& = \sum  (\theta.a_{(2)}^{X,X} - \varepsilon_X(a_{(2)}^{X,X})\theta) \otimes S_{Y,X}(a_{(1)}^{Y,X})a_{(3)}^{X,Y} \\
	& =  \sum  \theta.a_{(2)}^{X,X} \otimes S_{Y,X}(a_{(1)}^{Y,X})a_{(3)}^{X,Y} 
		- \theta \otimes \varepsilon_Y(a^{Y,Y})\\
	& = (\theta \otimes 1) \triangleleft a^{Y,Y} - \varepsilon_Y(a^{Y,Y})(\theta \otimes 1).
\end{align*}
Consequently, $\overline{\omega}$ is an inner reduced differential calculus, whose corresponding right-coinvariant element is $\theta \otimes 1.$
\end{rmq}

Combining the previous lemma with Theorem~\ref{bic_YD}, we obtain the main result of this section. It generalizes a result of~\cite{MO}, where the two monoidally equivalent Hopf
algebras are assumed to be related by a cocycle twist. 

\begin{theoreme}\label{th_eq}
Let $H$ and $L$ be two Hopf algebras such that there exists a linear monoidal equivalence
between their categories of right comodules $\cM^H$ and $\cM^L$.
Then there exists an equivalence between the categories:
\begin{itemize}
\item of bicovariant differential calculi $\cD\cC(H)$ and $\cD\cC(L)$,
\item of finite dimensional bicovariant differential calculi $\cD\cC_f(H)$ and $\cD\cC_f(L)$.
\end{itemize}
\end{theoreme}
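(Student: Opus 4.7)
The plan is to reduce everything to reduced differential calculi via Lemma~\ref{lem_rdc}, and then upgrade the equivalence $\cF_X^Y : \cY\cD(\cC(X,X)) \to \cY\cD(\cC(Y,Y))$ of Theorem~\ref{bic_YD} to an equivalence on the categories of reduced differential calculi, using the previous lemma to transport the structure map $\omega$.

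More precisely, fix a connected cogroupoid $\cC$ and objects $X,Y \in \ob(\cC)$ with $H \cong \cC(X,X)$ and $L \cong \cC(Y,Y)$ (such a cogroupoid exists by~\cite[Theorem 2.10]{HG_co} and connectedness gives $\cC(Y,X) \neq (0)$). By Lemma~\ref{lem_rdc}, it suffices to prove that $\cR\cD\cC(H) \simeq \cR\cD\cC(L)$, and similarly for the finite dimensional versions. I define a functor
\[
\widetilde{\cF}_X^Y : \cR\cD\cC(\cC(X,X)) \longrightarrow \cR\cD\cC(\cC(Y,Y)), \quad (V,\omega) \longmapsto (\cF_X^Y(V), \overline{\omega}),
\]
where $\overline{\omega}$ is the reduced differential calculus over $\cC(Y,Y)$ constructed in the previous lemma. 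On a morphism $f : (V, \omega_V) \to (W, \omega_W)$ of reduced differential calculi over $\cC(X,X)$, I set $\widetilde{\cF}_X^Y(f) = \cF_X^Y(f) = f \otimes id$, restricted to the cotensor product. The verification that $\widetilde{\cF}_X^Y(f)\circ\overline{\omega_V} = \overline{\omega_W}$ is immediate from the formula defining $\overline{\omega}$ and the identity $f \circ \omega_V = \omega_W$.

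The quasi-inverse is $\widetilde{\cF}_Y^X$, defined symmetrically. To conclude the equivalence, I need to promote the natural isomorphisms
\[
\eta^X_V : V \xrightarrow{\sim} \cF_Y^X \circ \cF_X^Y(V), \qquad \eta^Y_W : W \xrightarrow{\sim} \cF_X^Y \circ \cF_Y^X(W)
\]
of Theorem~\ref{bic_YD} to isomorphisms of reduced differential calculi, \ie I must verify that $\eta^X_V \circ \omega = \overline{\overline{\omega}}$ and similarly on the $Y$-side. This is where I expect the main computational effort to lie: one must unfold the double cotensor product $V \cotens_{\cC(X,X)} \cC(X,Y) \cotens_{\cC(Y,Y)} \cC(Y,X)$, apply the explicit formula for $\overline{\overline{\omega}}$ (which is $\overline{\omega}$ applied to $\cC(X,X)$ elements, then reinterpreted via the cogroupoid axioms), and collapse the cocycle-like expression using the antipode axioms $S_{Y,X}(a^{Y,X}_{(1)})a^{X,X}_{(2)} = u \circ \varepsilon_Y(a^{Y,Y})$ of the cogroupoid. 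The key telescoping is that the inner $\sum \omega(a^{X,X}_{(2)}) \otimes S_{Y,X}(a^{Y,X}_{(1)})a^{X,Y}_{(3)}$ survives applying $\cF_Y^X$ only up to multiplying by factors that collapse to $\varepsilon$'s, reproducing $\omega(a^{X,X})$ up to the natural isomorphism $\eta^X_V$.

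Once the full equivalence on $\cR\cD\cC$ is established, composing with Lemma~\ref{lem_rdc} yields $\cD\cC(H) \simeq \cD\cC(L)$. For the finite dimensional statement, I observe that a reduced differential calculus $(V,\omega)$ is finite dimensional iff $V$ is, and the functor $\cF_X^Y$ restricts to an equivalence $\cY\cD_f(H) \simeq \cY\cD_f(L)$ by the last sentence of Theorem~\ref{bic_YD}; hence $\widetilde{\cF}_X^Y$ restricts to an equivalence between the finite dimensional reduced (and therefore bicovariant) differential calculi. The hardest step is the naturality/compatibility verification in the previous paragraph; everything else is bookkeeping.
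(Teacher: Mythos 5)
Your proposal is correct and follows essentially the same route as the paper's proof: reduce to reduced differential calculi via Lemma~\ref{lem_rdc}, lift $\cF_X^Y$ to $\cR\cD\cC$ using the lemma constructing $\overline{\omega}$, and verify that the natural isomorphism of Theorem~\ref{bic_YD} intertwines $\omega$ with $\overline{\overline{\omega}}$ via the cogroupoid antipode axioms. The computation you identify as the main effort is exactly the one the paper carries out explicitly, and your treatment of morphisms and of the finite dimensional case also matches.
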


\begin{proof}
Let $\cC$ be a connected cogroupoid such that there exist $X,Y \in \ob(\cC)$
satisfying $\cC(X,X) \cong H$ and $\cC(Y,Y) \cong L$.
We consider the functor induced by Theorem~\ref{bic_YD} and the previous lemma:
\[
	\appl[\cF_X^Y:]{\cR\cD\cC(H)}{\cR\cD\cC(L)}{(V,\omega)}%
{(V \cotens\limits_{H} \cC(X,Y), \overline{\omega})}
\]
which sends a morphism $f:V \rightarrow W$ in $\cR\cD\cC(H)$, to
$\cF_X^Y(f) = f\otimes id : V \cotens\limits_{H} \cC(X,Y) \rightarrow W \cotens\limits_{H} \cC(X,Y)$.
It is known to be a morphism of Yetter-Drinfeld modules, and one easily checks
that it is a morphism of reduced differential calculi.

Since $\cF_X^Y$ is an equivalence between the categories of Yetter-Drinfeld modules
over $H$ and $L$, with quasi-inverse $\cF_Y^X$, we only have to check 
that the natural transformation providing the equivalence
$\cF_X^Y \circ \cF_Y^X \cong id$ consists of morphisms of reduced differential calculi.
In other words, we have to check that, for all $(V,\omega) \in \cR\cD\cC(H)$,
 the morphism of Yetter-Drinfeld modules:
\[
	\appl[\theta_V :]{V}{(V \cotens\limits_{H} \cC(X,Y)) \cotens\limits_{L} \cC(Y,X)}
{v}{\sum v_{(0)} \otimes v_{(1)}^{X,Y} \otimes v_{(2)}^{Y,X}}
\]
is a morphism of reduced differential calculi.
We have for $a^{X,X} \in H \cong \cC(X,X)$,
\begin{align*}
\theta_V \circ \omega(a^{X,X})
	& = \sum(id \otimes \Delta_{X,X}^Y)\left(\omega(a_{(2)}^{X,X}) \otimes 
		S_{X,X}(a_{(1)}^{X,X})a_{(3)}^{X,X}\right) \\
	& = \sum  \omega(a_{(3)}^{X,X}) \otimes
	S_{Y,X}(a_{(2)}^{Y,X})a_{(4)}^{X,Y} \otimes S_{X,Y}(a_{(1)}^{X,Y})a_{(5)}^{Y,X} \\
	& =  \sum  \overline{\omega}(a_{(2)}^{Y,Y}) \otimes S_{X,Y}(a_{(1)}^{X,Y})a_{(3)}^{Y,X} = \overline{\overline{\omega}}(a^{X,X}).
\end{align*}
Thus $\theta_V$ is a morphism of reduced differential calculi, and $\cF_X^Y$
is an equivalence of categories.
Gathering this with Lemma~\ref{lem_rdc}, we obtain an
equivalence $\cD\cC(H) \cong \cR\cD\cC(H) \cong  \cR\cD\cC(L) \cong  \cD\cC(L)$,
inducing an equivalence  $\cD\cC_f(H) \cong \cR\cD\cC_f(H) \cong  \cR\cD\cC_f(L) \cong  \cD\cC_f(L)$.
\end{proof}

\section{Classification of bicovariant differential calculi over free orthogonal Hopf algebras}

In this section, we gather the results of the previous sections in order to classify
the finite dimensional reduced differential calculi over the free orthogonal Hopf algebras.
To this end, we start by classifying the finite dimensional reduced differential calculi over the Hopf algebra $ \cO_q(SL_2)$, when $q \in \CC^*$ is not a root of unity.
This classification is based on the classification of finite dimensional $ \cO_q(SL_2)$-Yetter-Drinfeld modules made in~\cite{TAK}, and Lemma~\ref{lem_inner}..

\begin{definition}
Let $q \in \CC^*$ be not a root of unity.
$ \cO_q(SL_2)$ is the Hopf algebra generated by four elements $a,b,c,d$ 
subject to the relations:
\[
	\left\{ \begin{array}{l}
         	ba = q ab \; , \:ca = qac \; , \:db = qbd \; , \:dc = q cd \; ,  \:bc =  cb \; , \\
		ad -q^{-1}bc = da-qbc = 1.
         \end{array} \right.
\]
Its comultiplication, counit and antipode are defined by:
\[\begin{array}{llll}
\Delta (a) = a \otimes a + b  \otimes c, & \Delta (b)  = a \otimes b + b \otimes d, & \Delta (c) = c \otimes a + d \otimes c, &
\Delta (d) =  c \otimes b + d \otimes d, \\
\varepsilon (a) = \varepsilon(d) = 1, & \varepsilon(b) = \varepsilon(c) = 0 , &&\\
S(a) = d, & S(b ) = -qb,  & S(c) = -q^{-1}c, & S(d) = a.
\end{array}\]
\end{definition}

\begin{definition}
Let $n$ be in $\NN$. We denote by $V_n$ the simple right $ \cO_q(SL_2)$-comodule
with basis $(v_i^{(n)})_{0 \leqslant i \leqslant n}$, and coaction $\rho_n$ defined by:
\[
	\rho_n(v_i^{(n)}) = \sum_{k=0}^n v_k^{(n)} \otimes
		\left( \sum_{\substack{r+s=k \\ 0\leqslant r \leqslant i \\0 \leqslant s \leqslant n-i}} \cnk{i}{r}_{q^2}\cnk{n-i}{s}_{q^2}q^{(i-r)s}a^rb^sc^{i-r}d^{n-i-s}
	\right)
\]
where $\cnk{n}{k}_{q^2}$ denotes the $q^2$-binomial coefficient.
That is to say:
\[
\cnk{n}{k}_{q^2} = q^{k(n-k)}\frac{[n]_q!}{[n-k]_q![k]_q!}
\quad \text{with } [k]_q = \frac{q^k-q^{-k}}{q-q^{-1}}
\quad \text{and } [k]_q! = [1]_q.[2]_q \ldots [k]_q.
\]
\end{definition}

\begin{definition}
Let $n,m$ be in $\NN$ and let $\epsilon \in \{-1,1\}.$
We denote by $V_{n,m}^\epsilon$ the $ \cO_q(SL_2)$-Yetter-Drinfeld module
$V_n \otimes V_m$ equipped with its canonical right coaction, and with right module structure
defined by:
\begin{align*}
(v_i^{(n)} \otimes v_j^{(m)}).a 
	& =  \epsilon q^{\frac{m-n}{2}+i-j}v_i^{(n)} \otimes v_j^{(m)}, \\
(v_i^{(n)} \otimes v_j^{(m)}).b 
	&= -\epsilon q^{-\frac{n+m}{2}+i+j+1}(1-q^{-2})[j]_q v_i^{(n)} \otimes v_{j-1}^{(m)},\\
(v_i^{(n)} \otimes v_j^{(m)}).c
	&= \epsilon q^{\frac{m+n}{2}-i-j}(1-q^{-2})[n-i]_q v_{i+1}^{(n)} \otimes v_j^{(m)}, \\
(v_i^{(n)} \otimes v_j^{(m)}).d
	& = \epsilon q^{\frac{n-m}{2}+j-i} (v_i^{(n)} \otimes v_j^{(m)}
		-q(1-q^{-2})^2[j]_q[n-i]_q v_{i+1}^{(n)} \otimes v_{j-1}^{(m)}).
\end{align*}
$V_{n,n}^\epsilon$ will also be denoted by $V_n^\epsilon$.
\end{definition}

\begin{rmq}\label{rmq_class}
By~\cite{TAK}, every simple finite dimensional $ \cO_q(SL_2)$-Yetter-Drinfeld module is of the form $V_{n,m}^\epsilon$, and each finite dimensional  $ \cO_q(SL_2)$-Yetter-Drinfeld module
can be decomposed into a direct sum of simple Yetter-Drinfeld modules.
To see that our description of $V_{n,m}^\epsilon$ coincides with the one
given in~\cite[(6.4)]{TAK}, just consider the basis
$(v_{i,j})_{\substack{0\leqslant i \leqslant n \\ 0\leqslant j \leqslant m}}$
given by
\[v_{i,j} = \dfrac{1}{[n-i]_q![m-j]_q!} v_{n-i}^{(n)} \otimes v_j^{(m)}.
\]
One can check that the $v_{i,j}$'s satisfy~\cite[(6.4)]{TAK} 
and that the map
\[v_{i,j} \mapsto \dfrac{1}{[n-i]_q![m-j]_q!} v_{n-i}^{(n)} \otimes v_j^{(m)}\]
is an isomorphism of Yetter-Drinfeld modules.
\end{rmq}

\begin{rmq}~\label{rmq_simple}
Let $n,m$ be in $\NN$ and $\epsilon$ be in  $\{-1, 1\}$. 
The Clebsch-Gordan formula for the decomposition of $V_n \otimes V_m$ into simple comodules ensures that the space of right-coinvariant elements of $V_{n,m}^\epsilon$ is one-dimensional if $n= m$, and zero-dimensional otherwise. Hence
if $n\neq m$, there is no inner reduced differential calculus of the form $\omega : \cO_q(SL_2) \rightarrow V_{n,m}^\epsilon$,
and there is at most one (up to isomorphism) 
inner reduced differential calculus of the form $\omega : \cO_q(SL_2) \rightarrow V_n^\epsilon$.
If $(n,\epsilon) \neq (0,1)$,
then $V_n^\epsilon$ is not isomorphic to the Yetter-Drinfeld module $\CC_\varepsilon$, and by  Lemma~\ref{lem_simple},
there indeed exists such an inner reduced differential calculus, which we denote by
 $\omega_n^\epsilon : \cO_q(SL_2) \rightarrow V_n^\epsilon$.
\end{rmq}

As a direct consequence of Lemma~\ref{lem_inner}, and the fact that by~\cite{TAK},
the category $\cY\cD_f(\cO_q(SL_2))$ is semisimple, we have the following result.

\begin{proposition}
Each finite dimensional bicovariant differential calculus over $\cO_q(SL_2)$ is inner.
\end{proposition}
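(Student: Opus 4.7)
The plan is to reduce this statement to Lemma~\ref{lem_inner} via the equivalence of categories $\cD\cC(H) \cong \cR\cD\cC(H)$ established in Lemma~\ref{lem_rdc}. Concretely, I would start from a finite dimensional bicovariant differential calculus $(M,d)$ over $\cO_q(SL_2)$ and pass to the associated reduced differential calculus $(^{inv}M, \omega_d) = \cF(M,d)$. Since $M$ is finite dimensional, the module $^{inv}M$ of left-coinvariant elements is finite dimensional as well. By the result of~\cite{TAK} recalled in Remark~\ref{rmq_class}, the category $\cY\cD_f(\cO_q(SL_2))$ is semisimple, so Lemma~\ref{lem_inner} applies and yields that $(^{inv}M, \omega_d)$ is an inner reduced differential calculus.

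The remaining point is to transfer the inner property from the reduced side back to the bicovariant side through the quasi-inverse $\cG$. For this, I would verify that $\cF$ and $\cG$ preserve innerness. Assume $\omega_d(x) = \theta \triangleleft x - \varepsilon(x)\theta$ for some coinvariant $\theta \in \,^{inv}M$. Since $\theta$ is by definition left-coinvariant and the coaction $\rho$ restricted to $^{inv}M$ agrees with the Yetter-Drinfeld right coaction, $\theta$ is in fact bi-coinvariant in $M$. Unpacking the definition $w \triangleleft x = \sum S(x_{(1)}).w.x_{(2)}$ and the formula $\omega_d(x) = \sum S(x_{(1)}) d(x_{(2)})$, a direct Sweedler calculation gives
\[
\sum S(x_{(1)})\,d(x_{(2)}) \;=\; \sum S(x_{(1)}).\theta.x_{(2)} - \varepsilon(x)\theta,
\]
and multiplying both sides on the left by $x$ and using the antipode axiom yields $d(x) = \theta.x - x.\theta$. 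Hence $(M,d)$ itself is inner.

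The argument is essentially a packaging of earlier results, so I do not anticipate any real obstacle; the only thing that requires a small verification is the compatibility of the inner condition with the functors $\cF$ and $\cG$, which I would do via the short Sweedler computation sketched above. Once this compatibility is in place, the proposition follows in one line by combining Lemma~\ref{lem_rdc}, the semisimplicity of $\cY\cD_f(\cO_q(SL_2))$, and Lemma~\ref{lem_inner}.
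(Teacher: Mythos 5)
Your proposal is correct and follows exactly the route the paper takes: the paper states this proposition as an immediate consequence of the semisimplicity of $\cY\cD_f(\cO_q(SL_2))$ from~\cite{TAK} together with Lemma~\ref{lem_inner}, via the equivalence of Lemma~\ref{lem_rdc}. Your only addition is to spell out the (correct) Sweedler verification that innerness transfers between $(M,d)$ and $(^{inv}M,\omega_d)$, a step the paper leaves implicit.
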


This allows to deduce the classification of finite dimensional reduced differential
calculi over $\cO_q(SL_2)$.

\begin{theoreme}\label{th_class}
Every simple finite dimensional reduced differential calculus over $\cO_q(SL_2)$ is of the form $(V_n^\epsilon, \omega_n^\epsilon)$, with $n \in \NN$, $\epsilon \in \{-1,1\}$
and $(n,\epsilon) \neq (0,1)$.

Furthermore, each finite dimensional reduced differential calculus $(V,\omega)$
over $\cO_q(SL_2)$
can be decomposed into a direct sum:
\[(V,\omega) \cong \bigoplus\limits_{i=1}^d (V_{n_i}^{\epsilon_i}, \omega_{n_i}^{\epsilon_i}), \]
 where
$(n_1, \ldots n_d) \in \NN^d$, $(\epsilon_1, \ldots ,\epsilon_d) \in \{-1,1\}^d$
satisfies $(n_i, \epsilon_i) \neq (0,1)$ for all $i$ in $\{1, \ldots , d\}$ and $ (n_i,\epsilon_i) \neq (n_j, \epsilon_j)$  for all $i \neq j$.
\end{theoreme}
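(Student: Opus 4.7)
The plan is to combine the preceding proposition, which states that every finite-dimensional bicovariant (and hence every finite-dimensional reduced) differential calculus over $\cO_q(SL_2)$ is inner, with the classification of simple Yetter-Drinfeld modules over $\cO_q(SL_2)$ recalled in Remark~\ref{rmq_class}, and then to pass from the simple case to an arbitrary finite-dimensional reduced differential calculus using the semisimplicity of $\cY\cD_f(\cO_q(SL_2))$.

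First I would treat the simple case. If $(V,\omega)$ is a simple finite-dimensional reduced differential calculus, then by~\cite{TAK} we have $V \cong V_{n,m}^\epsilon$ for some $n,m \in \NN$ and $\epsilon \in \{-1,1\}$. The preceding proposition produces a right-coinvariant $\theta \in V$ with $\omega(x) = \theta.x - \varepsilon(x)\theta$, and surjectivity of $\omega$ forces $\theta \neq 0$. By Remark~\ref{rmq_simple}, the mere existence of a nonzero coinvariant in $V_{n,m}^\epsilon$ forces $n=m$, and one cannot have $(n,\epsilon)=(0,1)$, since otherwise $V \cong \CC_\varepsilon$ and $\omega$ would vanish, contradicting surjectivity. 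The uniqueness up to isomorphism of $\omega_n^\epsilon$ recorded in Remark~\ref{rmq_simple} then yields $(V,\omega) \cong (V_n^\epsilon, \omega_n^\epsilon)$.

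For an arbitrary finite-dimensional $(V,\omega)$, I would invoke semisimplicity to decompose $V = \bigoplus_{i=1}^d W_i$ into simple Yetter-Drinfeld summands, and set $\omega_i = \pi_i \circ \omega$, where $\pi_i : V \to W_i$ is the canonical projection. Each $\omega_i$ satisfies every axiom of a reduced differential calculus except possibly surjectivity; by Remark~\ref{rmq_im}, $\im(\omega_i)$ is a Yetter-Drinfeld submodule of the simple module $W_i$, hence equal to $W_i$ or $0$. The latter is impossible for any $i$: it would place $\im(\omega)$ inside $\bigoplus_{j \neq i} W_j$, contradicting surjectivity of $\omega$. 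Hence each $(W_i,\omega_i)$ is a simple reduced differential calculus, which the simple case identifies with some $(V_{n_i}^{\epsilon_i}, \omega_{n_i}^{\epsilon_i})$ satisfying $(n_i,\epsilon_i) \neq (0,1)$.

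It then only remains to verify pairwise distinctness of the labels. Since, by construction, $(V,\omega) \cong \bigoplus_i (V_{n_i}^{\epsilon_i}, \omega_{n_i}^{\epsilon_i})$ is a reduced differential calculus, the converse direction of Lemma~\ref{lem_sum} forces its simple summands to be pairwise non-isomorphic as reduced differential calculi, and the uniqueness part of Remark~\ref{rmq_simple} translates this into $(n_i,\epsilon_i) \neq (n_j,\epsilon_j)$ for $i \neq j$. I do not anticipate a serious obstacle here, since all the heavy inputs (semisimplicity, the innerness proposition, the classification of simple Yetter-Drinfeld modules, and Lemma~\ref{lem_sum}) are already in place; the only step requiring a little attention is ensuring that each projection $\pi_i \circ \omega$ really is surjective, which the simple/zero dichotomy combined with surjectivity of $\omega$ handles cleanly.
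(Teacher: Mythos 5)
Your proposal is correct and follows essentially the same route as the paper: use the innerness proposition together with the classification of simple Yetter--Drinfeld modules and Remark~\ref{rmq_simple} to settle the simple case, then decompose a general $(V,\omega)$ via semisimplicity, project onto each simple summand, and invoke Lemma~\ref{lem_sum} for the pairwise distinctness of the labels. The only difference is that you spell out the surjectivity of each $\pi_i\circ\omega$ (via the simple/zero dichotomy for its image), a step the paper leaves as ``one easily checks.''
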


\begin{proof}
Since each finite dimensional reduced differential calculus over $\cO_q(SL_2)$ is inner,
and each simple finite dimensional Yetter-Drinfeld module over $\cO_q(SL_2)$
is of the form $V_{n,m}^\epsilon$, we conclude by Remark~\ref{rmq_simple} that the simple
finite dimensional reduced differential calculi over $\cO_q(SL_2)$ are the
$(V_n^\epsilon, \omega_n^\epsilon)$ with $(n,\epsilon) \neq (0,1)$.
Now if $(V,\omega)$ is a finite dimensional reduced differential calculus over $\cO_q(SL_2)$,
by~\cite{TAK}, we have an isomorphism of Yetter-Drinfeld modules
$V \cong \bigoplus\limits_{i=1}^d
V_i$ where each $V_i$ is a simple Yetter-Drinfeld module.
One then easily checks that for $i \in  \{1,\ldots, d\}$,  $\omega_i = \pi_i \circ \omega :  \cO_q(SL_2) \rightarrow V_i$
(where $\pi_i : V \rightarrow V_i$ is the canonical projection) is a reduced differential calculus.
We thus have $(V_i, \omega_i) \cong (V_{n_i}^{\epsilon_i}, \omega_{n_i}^{\epsilon_i})$
for some  $(n_i, \epsilon_i) \neq (0,1)$.
Then $(V, \omega) \cong \bigoplus\limits_{i=1}^d (V_{n_i}^{\epsilon_i}, \omega_{n_i}^{\epsilon_i})$, and by Lemma~\ref{lem_sum}, we have  $(n_i,\epsilon_i) \neq (n_j, \epsilon_j)$ when $i\neq j$.
\end{proof}

In order to give the classification of finite dimensional reduced differential calculi over free orthogonal Hopf algebras, we need the definition of the bilinear cogroupoid $\cB$. It will provide an explicit description of the equivalence between the categories of reduced differential calculi over a free
orthogonal Hopf algebra $\cB(E)$ and  $\cO_q(SL_2)$, for a well chosen $q$.

\begin{definition}
The \emph{bilinear cogroupoid} $\cB$ is defined as follows:
\begin{itemize}
\item $\ob(\cB) = \{E \in GL_n(\CC) \tq n \geqslant 1 \}$,
\item For $E, F \in \ob(\cB)$, and $m,n \geqslant 1$ such that
$E \in GL_m(\CC)$ and $F \in GL_n(\CC)$, $\cB(E,F)$ is the universal algebra generated by
elements $(a_{ij})_{\substack{1\leqslant i \leqslant m \\ 1 \leqslant j \leqslant n}}$
submitted to the relations:
\[F^{-1} a^t E a =I_n \quad \text{and} \quad
	a F^{-1}a^t E = I_m,
\]
where $a = (a_{ij})_{\substack{1\leqslant i \leqslant m \\ 1 \leqslant j \leqslant n}}$.
\item For $E,F,G \in \ob(\cB)$,
$\Delta_{E,F}^G : \cB(E,F) \rightarrow \cB(E,G) \otimes \cB(G,F)$,
$\varepsilon_E : \cB(E,E) \rightarrow \CC$ and 
$S_{E,F} : \cB(E,F)\rightarrow\cB(F,E)$ are characterized by:
\begin{align*}
\Delta_{E,F}^G(a_{ij}) & = \sum\limits_{k=1}^n a_{ik} \otimes a_{kj}, 
\text{where } n \geqslant 1 \text{ is  such that  } G \in GL_n(\CC), \\
\varepsilon_E (a_{ij}) & = \delta_{ij},\\
S_{E,F}(a_{ij}) & = (E^{-1}a^tF)_{ij}.
\end{align*}
\end{itemize}
For $E \in GL_n(\CC)$,  $\cB(E,E)$ is a Hopf algebra, which will also be denoted by $\cB(E)$,
and called the \emph{free orthogonal Hopf algebra associated with $E$}.
\end{definition}

\begin{rmq}
One easily checks that $\cO_q(SL_2) = \cB(E_q)$,
where
\[
E_q = 
\begin{pmatrix}
0 & 1 \\
-q^{-1} & 0
\end{pmatrix}.
\]
\end{rmq}

By~\cite[Corollary 3.5]{HG_co}, for $\lambda \in \CC$, the subcogroupoid $\cB^\lambda$ of
$\cB$ defined by
\[\cB^\lambda = \{ E \in GL_m(\CC) \tq n\geqslant 2, \tr(E^{-1}E^t) = \lambda\}\]
is connected (here ``$\tr$'' denotes the usual trace).

In the following, $E \in GL_m(\CC)$ with $m \geqslant 2$, denotes a matrix such that 
 any solution of the equation
$q^2 + \tr(E^{-1} E^t)q + 1 =0$ is not a root of unity.

If $q$ is a solution of this equation, we  have $\tr(E_q^{-1} E_q^t) = -q-q^{-1} = \tr(E^{-1}E^t)$, thus $E$ and $E_q$
are in the connected cogroupoid $\cB^\lambda$, where $\lambda =  -q-q^{-1}$.
The Hopf algebras $\cB(E_q) = \cO_q(SL_2)$ and $\cB(E)$ are thus
monoidally equivalent, and by Theorem~\ref{th_eq}, we have
an equivalence between the categories of
reduced differential calculi $\cR\cD\cC(\cO_q(SL_2))$ and $\cR\cD\cC(\cB(E))$
given by:
\[
	\appl[\cF_{E_q}^{E} :]{\cR\cD\cC(\cO_q(SL_2))}{\cR\cD\cC(\cB(E))}
{(V,\omega)}{(V \cotens\limits_{\cO_q(SL_2)} \cB(E_q, E), \overline{\omega}).}
\]

\begin{definition}
For $n$ in $\NN$ and $\epsilon \in \{-1,1\}$ such that $(n, \epsilon) \neq (0,1)$,
we denote by $W_n^\epsilon$ the $\cB(E)$-Yetter-Drinfeld module $V_n^\epsilon  \cotens\limits_{\cO_q(SL_2)} \cB(E_q, E)$.
We fix a non-zero right-coinvariant element
$\theta_n \in V_n \otimes V_n$
and we denote by $\eta_n^\epsilon : \cB(E) \rightarrow W_n^\epsilon$
the inner reduced differential calculus defined by
$\eta_n^\epsilon(x) = (\theta_n \otimes 1) \triangleleft x - \varepsilon(x)(\theta_n \otimes 1)$.
\end{definition}

By Remark~\ref{rmq_inner}, $\cF_{E_q}^{E}(V_n^\epsilon, \omega_n^\epsilon)$
is isomorphic to $(W_n^\epsilon, \eta_n^\epsilon)$ for all $n \in \NN$ and all
$\epsilon \in \{-1,1\}$ such that $(n, \epsilon) \neq (0,1)$.
According to Theorems~\ref{th_eq} and~\ref{th_class}, we obtain the following classification
of finite dimensional reduced differential calculi over $\cB(E)$.

\begin{proposition}
Each finite dimensional bicovariant differential calculus over $\cB(E)$ is inner.
\end{proposition}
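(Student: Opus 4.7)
The plan is to reduce the statement to the analogous proposition over $\cO_q(SL_2)$ proved above, using the monoidal equivalence provided by the bilinear cogroupoid $\cB^\lambda$ together with Remark~\ref{rmq_inner}, which tells us that the functor $\cF_{E_q}^E$ sends inner reduced differential calculi to inner ones.

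More precisely, I would first note that by assumption on $E$, the solutions $q$ of $q^2 + \tr(E^{-1}E^t)q + 1 = 0$ are not roots of unity, so we may fix such a $q$ and work inside the connected cogroupoid $\cB^\lambda$ with $\lambda = -q-q^{-1}$, which contains both $E_q$ and $E$. In particular $\cB(E_q) = \cO_q(SL_2)$ and $\cB(E)$ are monoidally equivalent, and Theorem~\ref{th_eq} yields an equivalence of categories
\[
\cF_{E_q}^E : \cR\cD\cC_f(\cO_q(SL_2)) \longrightarrow \cR\cD\cC_f(\cB(E)),
\]
with quasi-inverse $\cF_{E}^{E_q}$. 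Combined with Lemma~\ref{lem_rdc}, this transports the classification problem for finite dimensional bicovariant differential calculi from $\cB(E)$ to $\cO_q(SL_2)$.

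Next, given any finite dimensional bicovariant differential calculus $(M,d)$ over $\cB(E)$, I would pass to the associated reduced differential calculus $(W,\eta) = \cF(M,d) \in \cR\cD\cC_f(\cB(E))$. By essential surjectivity of $\cF_{E_q}^E$, there exists $(V,\omega) \in \cR\cD\cC_f(\cO_q(SL_2))$ with $\cF_{E_q}^E(V,\omega) \cong (W,\eta)$. By the previous proposition, $(V,\omega)$ is inner (its bicovariant counterpart is, hence so is the reduced one via Lemma~\ref{lem_rdc} and the definitions of innerness, which correspond under $\cF$ and $\cG$). Then Remark~\ref{rmq_inner} applied to $(V,\omega)$ shows that $\cF_{E_q}^E(V,\omega)$ is an inner reduced differential calculus over $\cB(E)$, with coinvariant element $\theta \otimes 1$ where $\theta$ witnesses the innerness of $(V,\omega)$. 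Hence $(W,\eta)$ is inner, and going back through Lemma~\ref{lem_rdc} we conclude that $(M,d)$ is inner.

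There is no real obstacle here: the work has already been done in Section~2 (the monoidal equivalence of bicovariant calculi), in Remark~\ref{rmq_inner} (preservation of innerness), and in the previous proposition (innerness over $\cO_q(SL_2)$). The only point requiring minor care is to verify that the notion of innerness for a bicovariant differential calculus corresponds exactly, under the equivalence of Lemma~\ref{lem_rdc}, to the notion of innerness for its associated reduced differential calculus; this is a direct unwinding of the formulas $\omega_d(x) = \sum S(x_{(1)})d(x_{(2)})$ and $d_\omega(x) = \sum x_{(1)} \otimes \omega(x_{(2)})$, using that a bi-coinvariant $\theta \in M$ corresponds to a right-coinvariant element of $^{inv}M$ via the natural identifications.
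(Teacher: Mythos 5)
Your proof is correct and follows essentially the same route as the paper: transport the problem to $\cO_q(SL_2)$ via the monoidal equivalence $\cF_{E_q}^{E}$ of Theorem~\ref{th_eq}, invoke the innerness result over $\cO_q(SL_2)$, and use Remark~\ref{rmq_inner} to see that innerness is preserved. The paper phrases this through the explicit decomposition into the inner calculi $(W_{n_i}^{\epsilon_i},\eta_{n_i}^{\epsilon_i})$, but the content is the same.
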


\begin{theoreme}\label{th_class_free}
Every simple finite dimensional reduced differential calculus over $\cB(E)$ is of the form $(W_n^\epsilon , \eta_n^\epsilon)$, with $n \in \NN$, $\epsilon \in \{-1,1\}$ and $(n,\epsilon) \neq (0,1)$.

Furthermore, each finite dimensional reduced differential calculus $(W,\eta)$
over $\cB(E)$ can be decomposed into a direct sum:
\[(W,\eta) \cong \bigoplus\limits_{i=1}^d (W_{n_i}^{\epsilon_i}, \eta_{n_i}^{\epsilon_i}),
\]
 where
$(n_1, \ldots n_d) \in \NN^d$, $(\epsilon_1, \ldots ,\epsilon_d) \in \{-1,1\}^d$
satisfies $(n_i, \epsilon_i) \neq (0,1)$ for all $i$ in $\{1, \ldots , d\}$ and $ (n_i,\epsilon_i) \neq (n_j, \epsilon_j)$  for all $i \neq j$.
\end{theoreme}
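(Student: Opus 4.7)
The strategy is to transport the classification over $\cO_q(SL_2)$ (Theorem~\ref{th_class}) to $\cB(E)$ via the equivalence of categories $\cF_{E_q}^E : \cR\cD\cC_f(\cO_q(SL_2)) \to \cR\cD\cC_f(\cB(E))$ produced by Theorem~\ref{th_eq}. The existence of this equivalence has been justified just before the statement, since $E$ and $E_q$ both lie in the connected subcogroupoid $\cB^\lambda$ with $\lambda = -q - q^{-1}$, and its quasi-inverse is $\cF_E^{E_q}$.

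The first step is to classify simples. Let $(W, \eta) \in \cR\cD\cC_f(\cB(E))$ be simple. Setting $(V, \omega) = \cF_E^{E_q}(W, \eta)$ and using that $\cF_E^{E_q}$ is an equivalence (hence sends simple objects to simple objects, since simplicity of a reduced differential calculus is defined by simplicity of its underlying Yetter-Drinfeld module), $(V, \omega)$ is simple. By Theorem~\ref{th_class}, $(V, \omega) \cong (V_n^\epsilon, \omega_n^\epsilon)$ for a unique pair $(n, \epsilon)$ with $(n, \epsilon) \neq (0, 1)$. Applying $\cF_{E_q}^E$ and invoking both the natural isomorphism $\cF_{E_q}^E \circ \cF_E^{E_q} \cong \mathrm{id}$ and Remark~\ref{rmq_inner}, which identifies $\cF_{E_q}^E(V_n^\epsilon, \omega_n^\epsilon) \cong (W_n^\epsilon, \eta_n^\epsilon)$, one gets $(W, \eta) \cong (W_n^\epsilon, \eta_n^\epsilon)$.

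For the decomposition statement, given an arbitrary $(W, \eta) \in \cR\cD\cC_f(\cB(E))$, set $(V, \omega) = \cF_E^{E_q}(W, \eta)$. By Theorem~\ref{th_class}, $(V, \omega) \cong \bigoplus_{i=1}^d (V_{n_i}^{\epsilon_i}, \omega_{n_i}^{\epsilon_i})$ with the pairs $(n_i, \epsilon_i)$ pairwise distinct and different from $(0,1)$. Applying the additive functor $\cF_{E_q}^E$ (additivity follows because the cotensor product $- \cotens_{\cO_q(SL_2)} \cB(E_q, E)$ commutes with finite direct sums of Yetter-Drinfeld modules, and the construction $\omega \mapsto \overline{\omega}$ is compatible with projections onto direct summands), one obtains an isomorphism of reduced differential calculi $(W, \eta) \cong \bigoplus_{i=1}^d (W_{n_i}^{\epsilon_i}, \eta_{n_i}^{\epsilon_i})$. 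The pairwise distinctness condition transfers at once: an isomorphism between two summands on the $\cB(E)$ side would, by fully faithfulness of $\cF_E^{E_q}$, produce an isomorphism between the corresponding summands on the $\cO_q(SL_2)$ side, contradicting Theorem~\ref{th_class} (and Lemma~\ref{lem_sum}).

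The only delicate point is bookkeeping around Lemma~\ref{lem_sum}: one must check that the isomorphism class decomposition furnished by the equivalence is truly a direct sum of reduced differential calculi in the sense required, i.e.\ that the summand maps $\eta_i := \pi_i \circ \eta : \cB(E) \to W_{n_i}^{\epsilon_i}$ are themselves surjective reduced differential calculi isomorphic to $\eta_{n_i}^{\epsilon_i}$. This follows by transferring the analogous property on the $\cO_q(SL_2)$-side through $\cF_{E_q}^E$, using that the projections in the cotensor-product decomposition are precisely the images of the projections $V \to V_{n_i}^{\epsilon_i}$ under the functor.
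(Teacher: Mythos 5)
Your proposal is correct and follows essentially the same route as the paper, which obtains Theorem~\ref{th_class_free} by transporting Theorem~\ref{th_class} through the equivalence $\cF_{E_q}^{E}$ of Theorem~\ref{th_eq} together with the identification $\cF_{E_q}^{E}(V_n^\epsilon,\omega_n^\epsilon)\cong(W_n^\epsilon,\eta_n^\epsilon)$ from Remark~\ref{rmq_inner}. The extra details you supply (preservation of simplicity, compatibility of the cotensor product with finite direct sums, transfer of pairwise non-isomorphism by fully faithfulness) are exactly the bookkeeping the paper leaves implicit.
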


\bibliographystyle{malpha}
\bibliography{biblio.bib}

\end{document}